\newcommand \Rn{\mathbb R^n}
\renewcommand \>{\rangle}
\newtheorem*{thm*}{Theorem}
\begin{document}

\title{A note on Raki\'{c} Duality principle for Osserman manifolds}



\title{A note on the duality principle and  Osserman condition \hspace{.25mm}
\thanks{\,Z. R. is partially supported by the
\textsl{Serbian Ministry of Education and Science},
\textsf{project No. 174012.} Y.N. is partially supported by the
\textsl{La Trobe University DGS grant}.}}

\author{\textbf{Y. Nikolayevsky}\hspace{.25mm}
 \thanks{\,E-mail address: y.nikolayevsky@latrobe.edu.au}
  \\[3pt] \normalsize{Department of Mathematics and Statistics, La Trobe University,}\\[2pt] \normalsize{Victoria, 3086, Australia} \vspace{5mm}
  \\ \textbf{Z. Raki\'{c}}\hspace{.25mm}
      \thanks{\,E-mail address: zrakic@matf.bg.ac.rs
      \newline {\sl{Mathematical Subject Classification 2000:}} 53B30, 53C50.
\newline {\sl{Key words and phrases:}} Riemannian manifold, Jacobi operator,
Osserman manifold, duality principle.}
  \\[3pt] \normalsize{Faculty of Mathematics, University of Belgrade, Serbia}}

\date{ }
\maketitle



\begin{abstract}

\noindent In this note we prove that for a Riemannian manifold the Osserman pointwise condition is equivalent to the Raki\'{c} duality principle.

\end{abstract}

\section{Introduction}
\label{s:intro}
\noindent Let $\mathcal{R}$ be an algebraic curvature tensor on a Euclidean space $\Rn$ and let for $X \in \Rn$, $\mathcal{R}_X:Y\mapsto \mathcal{R}(Y,X)X$ be the corresponding Jacobi operator. An algebraic curvature tensor $\mathcal{R}$ is called {\it Osserman}, if the spectrum of the Jacobi operator $\mathcal{R}_X$ does not depend on the choice of a unit vector $X \in \Rn$. \vspace{2mm}

\noindent Let $M^n$ be a Riemannian manifold, $R$ its curvature tensor and $R_X$ the corresponding Jacobi operator. It is well known that the properties of $R_X$ are intimately related with the underlying geometry of the manifold. The manifold $M^n$ is called {\it pointwise Osserman} if $R$ is Osserman at every point $p \in M^n$, and is called {\it globally Osserman} if the spectrum of $R_X$ is the same for all $X$ in the unit tangent bundle of $M^n$. Locally two-point homogeneous spaces are globally Osserman, since the isometry group of each of these spaces is transitive on its unit tangent bundle. Osserman \cite{O} conjectured that the converse is also true. This gives a very nice characterisation of local two-point homogeneous spaces in terms of the geometry of the Jacobi operator.\vspace{2mm}

\noindent At present, the Osserman Cojecture is almost completely solved by the results of Chi \cite{C}, who proved the Conjecture in dimensions $n\neq 4k$, $k>1$ and $n=4$, and the first auhtor \cite{N1, N2, N3}, who proved it in all the remaining cases, except for some cases in dimension $n=16$. \vspace{2mm}

\noindent One of the crucial steps in the existing proofs of the Osserman Conjecture is the following \textsf{Raki\' c duality principle} \cite{R}: \begin{quote}
\textsl{Suppose $\mathcal{R}$ is an Osserman algebraic curvature tensor and $X, Y$ are unit vectors. Then $Y$ is an unit eigenvector of $\mathcal{R}_X$ if and only if $X$ is an unit eigenvector of $\mathcal{R}_Y$ (with the same eigenvalue)}.
\end{quote}
The duality principle is extended to the pseudo-Riemannian settings in \cite{AR}.\vspace{2mm}

\section{Equivalence of duality principle and Osserman pointwise condition}
\noindent  Recently, for  an algebraic curvature tensor in Riemannian signature, M.~Brozos-V\'{a}zquez and E.~Merino \cite{BM} proved the equivalence of the Osserman condition and the duality principle for spaces of dimension less than $5.$ We show that this holds in an arbitrary dimension.\vspace{3mm}

\begin{thm*}
The following two conditions for an algebraic curvature tensor $\mathcal{R}$ in Riemannian signature are equivalent:
\begin{itemize}
  \item[{\rm (a)}]
  $\mathcal{R}$ satisfies the  duality principle;
  \item[{\rm (b)}]
  $\mathcal{R}$ is Osserman.
\end{itemize}
\end{thm*}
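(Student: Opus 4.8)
The implication (b)$\Rightarrow$(a) is exactly the Raki\'c duality principle quoted in the Introduction, so the substance of the statement is the converse (a)$\Rightarrow$(b). The plan is to assume the duality principle and to show that the characteristic polynomial of the Jacobi operator $\mathcal{R}_X$ is the same for every unit vector $X$. Since the set of unit vectors is connected, it suffices to prove that the functions $X\mapsto\Tr(\mathcal{R}_X^m)$, $m\ge 1$, are constant, because the power sums of the eigenvalues determine the spectrum together with multiplicities (Newton's identities), and that is precisely the Osserman condition.

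The main computation is a first-variation argument along a smooth curve $X(t)$ on the unit sphere. First I would differentiate the Jacobi operator: from $\mathcal{R}_X Z=\mathcal{R}(Z,X)X$ one obtains $\dot{\mathcal{R}}_X Z=\mathcal{R}(Z,\dot X)X+\mathcal{R}(Z,X)\dot X$, where $\dot X=X'(t)$. Pairing with an arbitrary vector $Y$ and using the symmetries of an algebraic curvature tensor, the two resulting terms coincide and collapse to the single expression $\<\dot{\mathcal{R}}_X Y,Y\>=2\,\mathcal{R}(Y,\dot X,X,Y)=2\,\<\mathcal{R}_Y\dot X,X\>$. This identity holds for every $Y$ and uses only the curvature symmetries, not the hypothesis.

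Now I would bring in duality. Fix $t$ and an orthonormal eigenbasis $\{e_i\}$ of the self-adjoint operator $\mathcal{R}_X$, with $\mathcal{R}_X e_i=\lambda_i e_i$ (one may take $e_i=X$ for the eigenvalue $0$). Each $e_i$ is a unit eigenvector of $\mathcal{R}_X$, so by the duality principle $\mathcal{R}_{e_i}X=\lambda_i X$. Substituting $Y=e_i$ into the identity above and using that $\mathcal{R}_{e_i}$ is self-adjoint gives $\<\dot{\mathcal{R}}_X e_i,e_i\>=2\<\mathcal{R}_{e_i}\dot X,X\>=2\<\dot X,\mathcal{R}_{e_i}X\>=2\lambda_i\<\dot X,X\>=0$, the last equality because $X(t)$ stays on the unit sphere, so $\<\dot X,X\>=0$. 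Evaluating the trace in this eigenbasis then yields $\frac{d}{dt}\Tr(\mathcal{R}_X^m)=m\,\Tr(\mathcal{R}_X^{m-1}\dot{\mathcal{R}}_X)=m\sum_i\lambda_i^{m-1}\<\dot{\mathcal{R}}_X e_i,e_i\>=0$ for every $m\ge 1$. As this holds along every curve on the connected unit sphere, each $\Tr(\mathcal{R}_X^m)$ is constant, which gives the Osserman condition.

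The one point that needs care — and which I regard as the only genuine obstacle — is that the individual eigenvalue functions $\lambda_i(X)$ need not be differentiable where multiplicities jump, so one cannot simply differentiate them. Working with the power traces $\Tr(\mathcal{R}_X^m)$ circumvents this: they are smooth (indeed polynomial) in $X$, the derivative identity $\frac{d}{dt}\Tr(A^m)=m\Tr(A^{m-1}\dot A)$ is an unconditional trace identity, and the eigenbasis is invoked only to evaluate a trace at a single fixed $t$, where it always exists by self-adjointness. This is what makes the argument valid in arbitrary dimension.
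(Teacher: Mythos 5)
Your proof is correct, and it takes a genuinely different route from the paper's. The paper's argument rests on the same engine as yours --- a first variation along a curve in the unit sphere, with duality killing the cross term $\mathcal{R}(Y,e_0,X,e_0)=\lambda_0\langle X,Y\rangle=0$ --- but it tracks individual eigenvalue branches $\lambda(\phi)$ and eigenvector fields $e(\phi)$ along great circles. That forces it to invoke analytic perturbation theory (Rellich, Kato) to produce an open dense subset $S'\subset S$ on which multiplicities are constant and eigenvalues and eigenvectors can be chosen analytically; it then obtains constancy of the eigenvalues on connected components of $S'$ and needs a final density/continuity argument on the coefficients of the characteristic polynomial $\chi_X$ to pass from $S'$ to all of $S$. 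You instead differentiate the power traces $\Tr(\mathcal{R}_X^m)$, which are polynomial in $X$ and hence smooth on the whole sphere, and you invoke the spectral theorem only pointwise, to evaluate $\Tr(\mathcal{R}_X^{m-1}\dot{\mathcal{R}}_X)$ at a fixed parameter value --- exactly sidestepping the non-differentiability of eigenvalues at multiplicity jumps, as you observe. This buys a proof with no perturbation theory, no dense subset, and no limiting argument: the identity $\langle\dot{\mathcal{R}}_X e_i,e_i\rangle=2\lambda_i\langle\dot X,X\rangle=0$ gives constancy of each $\Tr(\mathcal{R}_X^m)$ on all of $S$ at once, and Newton's identities recover the Osserman condition. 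What the paper's version buys in exchange is slightly more refined information (local analyticity and constancy of the individual eigenvalue functions on $S'$), but for the theorem as stated your argument is the more elementary and self-contained of the two.
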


\begin{proof}
The implication (b) $\implies$ (a) is proved in \cite{R}.\vspace{1.2mm}

\noindent To establish the converse, consider the characteristic polynomial $\chi_X(t)$ of the Jacobi operator $\mathcal{R}_X$, where $X$ is a unit vector. As the coefficients of $\chi_X$ are analytic function on the unit sphere $S \subset \Rn$, there is an open and dence subset $S' \subset S$ such that for all $X \in S'$ the number and the multiplicity of the eigenvalues of $\mathcal{R}_X$ are constant, the eigenvalues are analytic functions of $X$, and the eigendistributions of $J_X$ are analytic (viewed as the curves in the appropriate Grassmannians), see \cite{Re}, \cite{K}. \vspace{1mm}

\noindent Let $X \in S'$ and let $Y \in S$ be orthogonal to $X$. Suppose $\lambda_0$ is an eigenvalue of $\mathcal{R}_X$ with a unit eigenvector $e_0$. For small $\phi$, the vector $\cos \phi X + \sin \phi Y$ belongs to $S'$, so there exist a differentiable (in fact, analytic) eigenvalue function $\lambda(\phi)$ of the operator $\mathcal{R}_{\cos \phi X + \sin \phi Y}$ such that $\lambda(0)=\lambda_0$ and a differentiable unit vector function $e(\phi)$, a section of the $\lambda(\phi)$-eigenspace of $\mathcal{R}_{\cos \phi X + \sin \phi Y}$ such that $e(0)=e_0$. Differentiating the equation
\begin{equation*}
\mathcal{R}(\cos \phi X + \sin \phi Y,e(\phi),\cos \phi X + \sin \phi Y,e(\phi))=\lambda(\phi)
\end{equation*}
at $\phi=0$ we obtain
\begin{equation*}
2\mathcal{R}(Y,e_0,X, e_0)+2\mathcal{R}(X,e_0,X,e'(0))=\lambda'(0).
\end{equation*}
But $\mathcal{R}(X,e_0,X,e'(0))=\lambda_0\<e_0,e'(0)\>=0$ and also $\mathcal{R}(Y,e_0,X, e_0)=\lambda_0\<X,Y\>=0$, by duality. It follows that the eigenvalues of $\mathcal{R}_X$ are constant on every connected component of $S'$. Then the coefficients of $\chi_X(t)$ are constant on the whole unit sphere $S$, which implies that $\mathcal{R}$ is Osserman.
\end{proof}


\begin{thebibliography}{VM}

\bibitem[AR]{AR}
Z. Raki\' c, V. Andreji\' c,
\textsf{On the duality principle in pseudo-Riemannian Osserman manifolds},
\textsl{J. Geom. Phys.} \textbf{57} (2007), 2158\,--\,2166.

\bibitem[BM]{BM}
M. Brozos-V\'{a}zquez, E. Merino,
\emph{Equivalence between the Osserman condition and the Raki\`c duality principle in dimension four}, preprint, arXiv:1109.0386 [math.DG].

\bibitem[C]{C}
Q. S. Chi, \textsf{A curvature characterization  of certain locally rank-one
symmetric spaces}, \textsl{{J. Diff. Geom.}} {\bf 28} (1988), 187\,--\,202.

\bibitem[K]{K} T. Kato, \textsf{Perturbation theory for linear operators}, Grundlehren 132, 1976, Springer-Verlag, Berlin, New York.

\bibitem[N1]{N1} Y. Nikolayevsky,
\textsf{ Osserman manifolds of dimension $8$}, \textsl{Manuscr. Math.} {\bf 115} (2004), 31\,--\,53.

\bibitem[N2]{N2} Y. Nikolayevsky, \textsf{Osserman conjecture in dimension $n\neq 8,16$},
\textsl{Math. Ann.} {\bf 331} (2005), 505\,--\,522.


\bibitem[N3]{N3} Y. Nikolayevsky, \textsf{On Osserman manifolds of dimension $16$},
\textsl{Contemporary Geometry and Related Topics, Proc. Conference, Belgrade}
(2006), 379\,--\,398

\bibitem[O]{O}
R. Osserman, \textsf{Curvature in the eighties}, \textsl{Amer. Math. Monthly}
{\bf 97} (1990), 731\,--\,756.

\bibitem[R]{R}
Z. Raki\' c,
\textsf{On duality principle in Osserman manifolds},
\textsl{Linear Alg. Appl.,}
\textbf{296} (1999), 183 -- 189.

\bibitem[Re]{Re} F. Rellich, \textsf{Perturbation Theory of Eigenvalue Problems}, 1953, New York University
Lecture Notes reprinted by Gordon and Breach, 1968.




\end{thebibliography}
\end{document}